\newtheorem{theorem}{Theorem}[section]
\newtheorem{proposition}{Proposition}[section]
\newtheorem{lemma}{Lemma}[section]
\newtheorem{definition}{Definition}[section]
\def \bC {\mathbb C}
\def \bH {\mathbb H}
\def \bN {\mathbb N}
\def \bR {\mathbb R}
\def \bS {\mathbb S}
\def \cA {\mathcal A}
\def \cC {\cC}
\def \cF {\mathcal F}
\def \cH {\mathcal H}
\def \cK {\mathcal K}
\def \cL {\mathcal L}
\def \cM {\mathcal M}
\def \cS {\mathcal S}
\def \fg {\mathfrak g}
\def \fU {\mathfrak U}
\def \sL{\mathscr L}
\def \tr {{\rm Tr}}
\def \id {\text{\rm I}}
\def \Op {{\rm Op}}
\def \Gh {{\widehat G}}
\def \eps {\varepsilon}
\title{Towards semi-classical analysis for sub-elliptic operators
}
\author[V. Fischer]{V\'eronique Fischer}
\address[V. Fischer]%
{University of Bath, Department of Mathematical Sciences, Bath, BA2 7AY, UK} 
\email{v.c.m.fischer@bath.ac.uk}
\begin{document}

\begin{abstract}
We discuss the recent developments of semi-classical and micro-local analysis 
in the context of nilpotent Lie groups and for sub-elliptic operators. 
In particular, we give an overview  of pseudo-differential calculi recently defined on nilpotent Lie groups as well as of the notion of quantum limits in the Euclidean and nilpotent cases. 

\medskip \noindent {\sc{2010 MSC.}}
43A80; 58J45,  35Q40.

 \noindent {\sc{Keywords.}} 
Analysis on nilpotent Lie groups,
	evolution of solutions to the Schr\"odinger equation, 
	micro-local and semi-classical analysis for sub-elliptic operators, abstract harmonic analysis, $C^*$-algebra theory.
\end{abstract}
\maketitle

\makeatletter
\renewcommand\l@subsection{\@tocline{2}{0pt}{3pc}{5pc}{}}
\makeatother

\tableofcontents

\section{
Introduction}

Since the 1960's, the analysis of elliptic operators has made fundamental progress with the emergence of pseudo-differential theory and the subsequent developments of micro-local and semi-classical analysis. 
In this paper, we consider some questions that are well understood for elliptic operators
and we discuss analogues in the setting  of sub-elliptic operators.

\subsection{The questions in the elliptic framework.}
The questions we are interested in concern   the tools that have been developed in the elliptic framework to describe and  understand   the limits in space or in phase-space of families of functions.
They are of two natures: micro-local and semi-classical. 
Micro-local analysis aims at understanding elliptic operators in high frequency, 
while semi-classical analysis  investigate the mathematical evolution of functions and operators depending on a small parameter $\eps$ (akin to the Planck constant in quantum mechanics) that goes to zero. 

A typical micro-local question is, for instance,  to `understand the convergence' as $j\to \infty$ of an orthonormal basis of  eigenfunctions $\psi_j$, $j=0,1,2,\ldots$
$$
\Delta \psi_j = \lambda_j \psi_j, \qquad
\qquad\mbox{with}\quad 0=\lambda_0 <\lambda_1\leq \lambda_2\leq \ldots
$$
 of the  Laplace operator $\Delta$ on a compact Riemannian manifold $M$. 
One way to answer this question is to describe the accumulation points of the sequence of probability measures $|\psi_j(x)|^2 dx$, $j=0,1,2,\ldots$ 
If $M$ is the $n$-dimensional torus or if the geodesic flow of $M$ is ergodic, then 
 the volume element $dx$ is an accumulation point of  $|\psi_j(x)|^2 dx$, $j=0,1,2,\ldots$  and   one can extract a subsequence of density one $(j_k)_{k\in \bN}$, 
$$
\mbox{i.e.}\quad	\lim_{\Lambda\to \infty} \frac{|\{j_k : \lambda_{j_k} \leq \Lambda\}|}{|\{j : \lambda_{j} \leq \Lambda\}|} =1,
$$
 for which the convergence holds, that is,  for any continuous function $ a:M\to \bC$,
\begin{equation}
\label{eq_QE_intro}
\lim_{k\to +\infty}
	\int_M a( x) \ | \varphi_{j_k}(x)|^2 d x = \int_M a ( x)\, d x.
\end{equation}
Under the ergodic hypothesis,
this is a famous result due to 
Shnirelman \cite{Shnirelman}, Colin de Verdi\`ere \cite{Colin85}, and Zelditch \cite{zelditch} in 1970's and 80's and sometimes called the Quantum Ergodicity Theorem - see also the semi-classical analogue in
\cite{helffer+martinez+robert}.

A typical semi-classical problem is to understand the quantum evolution of the Schr\"odinger equation
$$
i\eps \partial_t \psi^\eps = -\frac {\eps^2}2\Delta \psi^\eps, 
\qquad \mbox{given an} \ L^2\mbox{-bounded family of initial datum}\ \psi^\eps|_{t=0} = \psi^\eps_0;
$$ 
in this introduction, let us consider the setting of  $\bR^n$ to fix  ideas.
Again, a mathematical formulation consists in describing the accumulation points of the sequence of  measures $|\psi^\eps(t,x) |^2 dx dt$ as $\eps\to 0$. 

\subsection{Sub-elliptic operators.}
In this paper, we discuss the extent to which these types of questions have been addressed for sub-elliptic operators. 
The main examples of sub-elliptic operators are sub-Laplacians $\cL$ generalising the Laplace operator. 
Concrete examples of sub-elliptic and non-elliptic operators include 
$$
\cL_{G} = -\partial_u^2 - (u\partial_v)^2 \quad\mbox{on}\ \bR_u\times \bR_v =\bR^2 ,
$$
often called the Grushin operator (the subscript $G$ stands for Grushin). 
More generally, H\"ormander sums of squares  are sub-elliptic operators; they are operators $\cL= -X_1^2-\ldots -X_{n_1}^2-X_0$ on a manifold $M^n$ where the vector fields $X_j$'s together with their iterated brackets generate  the tangent space $TM$ at every point \cite{hormander67}. 
A more geometric source of sub-Laplacians is the analysis on sub-Riemannian manifolds, starting with CR manifolds such as the unit sphere of the complex plane $\bC^2$ or even of $\bC^n$ for any $n\geq 2$, and more generally contact manifolds. 
Well-known contact manifolds of  dimension three  include  the Lie group $SO(3)$ with two of its three canonical vector fields, as well as the motion group $\bR^2_{x,y} \times \bS^1_\theta$ with the vector fields $X_1=\cos \theta \partial_x +\sin \theta \partial_y$, and $X_2=\partial_\theta$. 
Sub-Laplacians appear in many parts of sciences, in physics, biology, finance, etc., see \cite{bramanti}.

A particular framework of sub-Riemannian and sub-elliptic settings is given by Carnot groups; the latter are stratified nilpotent Lie groups $G$ equipped with a basis $X_1,\ldots, X_{n_1}$  for the first stratum $\fg$ of the Lie algebra of $G$. 
Using the natural identification of $\fg$ with the space left-invariant vector fields,
the canonical sub-Laplacian is then $\cL= -X_1^2-\ldots -X_{n_1}^2$.
This is an important class of examples not only because this provides a wealth of models and settings on which to test conjectures, but also more fundamentally, as any H\"ormander sum of squares can be lifted -  at least theoretically 
\cite{FollandStein74,Rothschild+Stein,Nagel+Stein+Wainger}
 -  to a Carnot group. 
For instance, the Grushin operator $\sL_G$ on $\bR^2$ described above can be lifted to 
the sub-Laplacian $\sL_{\bH_1} = -X_1^2 -X_2^2$ on 
the Heisenberg group $\bH_1$;
here the product on $\bH_1\sim \bR^3_{x,y,t}$ is given by 
$$
(x,y,t) (x',y',t') = \left(x+x',y+y',t+t'+\frac 12 (xy'-x'y)\right), 
$$
and $X_1= \partial_x -\frac y2 \partial_t$ and $X_2= \partial_y +\frac x2 \partial_t$.

The examples given above infer that the analysis of sub-elliptic operators such as H\"ormander sums of squares is more non-commutative than in the elliptic case. Indeed, the commutator of the vector fields $X_j$'s  in our examples above  usually produces terms that cannot be neglected in any meaningful elliptic analysis, whereas in the elliptic case the $X_j$'s can be chosen to yield local coordinates and therefore commute up to lower order terms.  
This led to a difficult non-commutative analysis in the late 70's and 80's around the ideas of lifting the nilpotent Lie group setting \cite{FollandStein74,Rothschild+Stein,Nagel+Stein+Wainger}, 
and subsequently in 80's and 90's using  Euclidean micro-local tools as well  \cite{Fefferman+Phong,Sanchez,Parmeggiani}. 
At the same time, sub-Riemannian geometry was emerging. Although many functional features are almost identical to the 
Riemannian case \cite{Strichartz86}, there are fundamental differences regarding e.g. geodesics, charts or local coordinates, tangent spaces etc.
see e.g.
\cite{Bellaiche,Gromov,Montgomery,AgrachevBB}.  

The analysis of operators on classes of sub-Riemannian manifolds
started with CR and contact manifolds \cite{FollandStein74}, 
followed by a calculus on Heisenberg manifolds \cite{Beals+Greiner,PongeAMS2008}.
In 2010 \cite{vanErp}, an index theorem was proved for sub-elliptic operators on contact manifolds.
The key idea was to adapt Connes' tangent groupoid \cite{Connes} from the Riemannian setting to the sub-Riemannian's. 
 For contact manifolds, the Euclidean tangent space is then replaced with the Heisenberg group. Since then, 
considerable progress has been achieved in the study
of spectral properties of sub-elliptic operators in these contexts (see e.g. \cite{Dave+Haller}) with the development of these groupoid techniques on filtered manifolds \cite{vanErp,choi+ponge,vanErp+Y}. 

Few works on sub-elliptic operators followed the path opened by M. Taylor \cite{TaylorAMS} at the beginning of the 80's, 
that is,  to use the representation theory of the underlying groups to tackle the non-commutativity. 
To the author's knowledge, in the nilpotent case, they are essentially \cite{Bahouri+Fermanian+Gallagher,R+F_monograph,Bahouri+Chemin+Danchin}  
 and, surprisingly, have appeared only in the past decade. 

\subsection{Aim and organisation of the paper}
This paper describes the scientific journey of the author and of her collaborator Clotilde Fermanian-Kammerer  towards
micro-local and semi-classical analysis for sub-elliptic operators, especially  on nilpotent Lie groups. 
The starting point of the investigations was to define and study the analogues of micro-local defect measures. As explained in Section \ref{sec_QL}, this has led  to adopt the more general view point and the vocabulary from $C^*$-algebras regarding states even in the Euclidean or elliptic case. 
The first results regarding micro-local defect measure and semi-classical measures on nilpotent Lie groups are presented 
in Section \ref{sec_PDOQLG}, including 
applications and future works.

\subsection{Acknowledgement}
The author is grateful to the Leverhulme Trust for their support via Research Project Grant 2020-037. 

This paper summarises  the main ideas  discussed by the author to the Bruno Pini Mathematical Analysis Seminar of the University of Bologna in May 2021. 
The author would like to thank the organisers for giving her the opportunity  to present the project and for their warm welcome - even in zoom form.

\section{Quantum limits in Euclidean or elliptic settings}
\label{sec_QL}

In this section, we discuss how micro-local defect measures and semi-classical measures can be seen as quantum limits, that is, as states of $C^*$-algebras. 

\subsection{Micro-local defect measures}

The notion of micro-local defect measure, also called H-measure, emerged around 1990 independently in the works of P. G\'erard \cite{gerard_91} and L. Tartar \cite{tartar}. 
Their motivations came from PDEs, in relation to the div-curl lemma and more generally to describe phenomena of compensated compactness. The following result gives the existence of micro-local defect measures:

\begin{theorem}[\cite{gerard_91}]
\label{thm_MDM}
Let $\Omega$ be an open subset of $\bR^n$. 
Let $(f_j)_{j\in \bN}$ be a bounded sequence in $L^2(\Omega,loc)$ converging weakly to 0.  
Then there exist a subsequence $(j_k)_{k\in \bN}$ and a positive Radon measure $\gamma$ on $\Omega\times \bS^{n-1}$ such that the convergence
$$
(Af_j,f_j)_{L^2} \longrightarrow_{j=j_k, k\to \infty} \int_{\Omega\times \bS^{n-1}} a_0(x,\xi) d\gamma(x,\xi)
$$
holds for all classical pseudo-differential operator $A$, $a_0$ denoting its principal symbol. 
\end{theorem}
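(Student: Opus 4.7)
The plan is to construct $\gamma$ via the Riesz--Markov--Kakutani representation theorem, applied to a positive linear functional obtained as a weak-$*$ limit of the bilinear forms $A \mapsto (Af_j,f_j)_{L^2}$. The principal symbol of a classical PsiDO of order $0$ is a continuous function on $T^*\Omega\setminus 0$ that is homogeneous of degree $0$ in $\xi$, i.e.\ a continuous function on $\Omega\times\bS^{n-1}$; the target functional will live on this space.

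First I would exploit separability of $C_c(\Omega\times\bS^{n-1})$: pick a countable dense family $(a_0^{(k)})_k$ and self-adjoint classical PsiDOs $A^{(k)}$ of order $0$, with symbol compactly supported in $x$, having these principal symbols. Since each $A^{(k)}$ is $L^2$-bounded and $(f_j)$ is bounded on every fixed compact set, the scalars $(A^{(k)}f_j,f_j)$ are bounded in $j$. A Cantor diagonal extraction then yields a subsequence $(j_l)$ along which $(A^{(k)}f_{j_l},f_{j_l})\to \ell(a_0^{(k)})$ for every $k$.

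Next I would upgrade $\ell$ to a continuous positive linear functional on all of $C_c(\Omega\times\bS^{n-1})$. For continuity, given a classical PsiDO $A$ of order $0$ with principal symbol $a_0$ and $x$-support in a compact $K$, the Calder\'on--Vaillancourt estimate combined with compactness of $-1$st-order operators on $L^2_{loc}$ yields $\|Af_j\|_{L^2}^2\leq (\sup|a_0|^2)\|f_j\|^2 + o(1)$, so $|\ell(a_0)|\leq (\sup|a_0|)\,\limsup_j\|f_j\|_{L^2(K)}$. For positivity, when $a_0\geq 0$ I would invoke the sharp G{\aa}rding inequality (or Friedrichs symmetrisation) to produce a self-adjoint representative with principal symbol $a_0$ satisfying $A\geq -R$ for some $R$ of order $-1$ with compactly supported symbol. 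Such $R$ is compact on $L^2(\bR^n)$ by Rellich, and since $f_j\rightharpoonup 0$ weakly in $L^2_{loc}$ one has $Rf_j\to 0$ strongly, hence $(Rf_j,f_j)\to 0$. Therefore $\liminf_l (Af_{j_l},f_{j_l})\geq 0$, giving $\ell(a_0)\geq 0$.

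Well-definedness of $\ell$ as a function of $a_0$ alone follows from the same compactness principle: two classical PsiDOs with the same principal symbol differ by an operator of order $-1$, which is locally compact on $L^2$ and so contributes nothing to the limit against a weakly null sequence. Riesz--Markov--Kakutani then produces the positive Radon measure $\gamma$ on $\Omega\times\bS^{n-1}$ representing $\ell$, and the claimed convergence extends from the dense family to \emph{every} classical PsiDO of order $0$ by the continuity estimate. The main technical obstacle is the positivity step: one needs a form of sharp G{\aa}rding whose remainder is genuinely compact (not merely bounded) against the weakly null sequence, which is exactly where the $L^2_{loc}$ boundedness, the compact support of the cut-off symbols, and Rellich's theorem combine. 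Everything else is soft functional analysis.
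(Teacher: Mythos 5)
Your argument is correct, but it follows P.~G\'erard's original route rather than the one the paper presents. The paper first proves the statement for a bounded open set (Lemma~\ref{lem_thm_MDM}) by regarding $A\mapsto (Af_j,f_j)$ as a sequence of functionals on the algebra $\cA_0$ of classical operators with $x$-support in $\bar\Omega$: weak convergence of $(f_j)$ to zero annihilates the ideal $\cK$ of compact (equivalently, negative-order) operators by Rellich, so any limit functional descends to a state on the commutative $C^*$-algebra $\overline{\cA_0/\cK}\cong C(\bar\Omega\times\bS^{n-1})$, hence is a positive Radon measure; general $\Omega$ is then handled by exhaustion and a further diagonal extraction. All the hard analysis is absorbed into the Gelfand-theoretic identification of the quotient algebra (Cordes et al.), and positivity comes for free from the abstract fact that a limit of vector states is a state. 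You unfold exactly this content by hand: sharp G{\aa}rding for positivity, the essential-norm bound $\|A\|_{\sL(L^2)/\cK}\le \sup|a_0|$ for continuity, well-definedness modulo order $-1$, and Riesz--Markov--Kakutani in place of ``states on a commutative $C^*$-algebra are measures''; you also avoid the exhaustion step by working directly with a countable dense family in $C_c(\Omega\times\bS^{n-1})$. Your route is self-contained given the standard classical calculus; the paper's route is designed to transfer to settings where the symbol algebra is noncommutative and no G{\aa}rding inequality is readily available --- precisely the graded-group case of Theorem~\ref{thm_MDMG}, which is the point of the paper. One imprecision to fix: Calder\'on--Vaillancourt alone does not yield $\|Af_j\|^2\le(\sup|a_0|)^2\|f_j\|^2+o(1)$; that estimate is the essential-norm computation and requires the same square-root/Friedrichs construction as your positivity step, applied to $c^2-A^*A$ with $c>\sup|a_0|$. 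Since you already invoke that machinery, this is a presentational slip rather than a gap.
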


Here, $\bS^{n-1}$ denotes the unit sphere in $\bR^n$.
The classical pseudo-differential calculus refers to all the H\"ormander pseudo-differential operators of non-positive order, with symbols admitting a homogeneous expansion and with integral kernel compactly supported in $\Omega\times \Omega$. 

The measure $\gamma$ in Theorem \ref{thm_MDM}
is  called a micro-local defect measure for $(f_j)$, 
or the (pure) micro-local defect measure for $(f_{j_k})$. 
Examples of micro-local defect measures include 
\begin{itemize}
\item an  $L^2$-concentration in space $f_j(x) = j^{n/ 2}\chi(j(x-x_0) ) $ about a point $x_0$ (here, $\chi\in C_c^\infty(\bR^n)$ is some given  function), whose micro-local defect measure is $\gamma(x,\xi)=\delta_{x_0} (x)\otimes d_\chi(\xi) d\sigma(\xi)$, where $\sigma$ is the uniform measure on $\bS^{n-1}$ (i.e. the standard surface measure on the unit sphere) and $d_\chi(\xi):=  \int_{r=0}^\infty |\widehat \chi(r\xi)|^2 r^{n-1} dr$ , 
\item an  $L^2$-concentration in oscillations $f_j(x) = \psi(x) e^{2i\pi j \xi_0 \cdot x}  $ about a frequency  $\xi_0\in \bS^{n-1}$ (here, $\psi$ is some given smooth function with compact support in $\bR^n$), whose micro-local defect measure is $\gamma(x,\xi)=|\psi(x)|^2dx  \otimes \delta_{\xi_0}(\xi) $.  
\end{itemize} 

Theorem \ref{thm_MDM} extends readily to manifolds by replacing $\Omega\times \bS^{n-1}$ with the spherical co-tangent bundle. 

The introduction of this paper mentions the Quantum Ergodicity Theorem, see \eqref{eq_QE_intro}. This is in fact the reduced version `in position'. A modern presentation of the full Quantum Ergodicity Theorem can be expressed as saying that the Liouville measure $dx\otimes d\sigma(\xi)$ is a micro-local defect measure of the sequence $(\psi_j)_{j\in \bN_0}$, for which the subsequence $(j_k)$ is of density one. 

\subsection{The viewpoint of quantum limits}
\label{subsec_viewptQL}

The author's definition of quantum limits is a notion along the line of the following: 
\begin{definition}
The \emph{quantum limit} of a sequence $(f_j)$ of unit vectors in a Hilbert space $\cH$ is any accumulation point 
of the functional $A\mapsto (Af_j,f_j)_\cH$ on a sub-$C^*$-algebra of $\sL(\cH)$.
\end{definition}

One may still keep the vocabulary  `quantum limits' in slightly more general contexts. 
For instance, one often encounters a subalgebra of $\sL(\cH)$ that may need to be completed into a $C^*$-algebra, 
possibly after quotienting by (a  subspace of) the kernel of the mapping   $A\mapsto \limsup_{j\to \infty} |(Af_j,f_j)_\cH|$. 
We may also consider a bounded family $(f_j)$ in $\cH$ rather than unit vectors, leaving the normalisation for the proofs of further properties.  

The applications we have in mind involve pseudo-differential calculi as subalgebras of $\sL(\cH)$ where the Hilbert space $\cH$ is some $L^2$-space. 
A quantum limit in this context will often turn out to be a state 
(or a positive functional if the $\|f_j\|_{\cH}$'s are only bounded)
 on a space of symbols, hence a positive Radon measure in the commutative case. Indeed, from functional analysis, we know that a bounded linear functional on the space of continuous functions on a (say) compact space is given by a Radon measure, and if the functional is also positive, the measure will be positive as well.

\medskip

Let us now explain how the viewpoint of quantum limits and states gives another proof of Theorem \ref{thm_MDM} by first obtaining  the following result:

\begin{lemma}
\label{lem_thm_MDM}
Let $\Omega$ be an open bounded subset of $\bR^n$. 
Let $(f_j)_{j\in \bN}$ be a bounded sequence in $L^2(\bar \Omega)$ converging weakly to 0 as $j\to \infty$.
Then there exists a subsequence $(j_k)_{k\in \bN}$ and a positive Radon measure on $\bar \Omega\times \bS^{n-1}$ such that 
$$
(Af_j,f_j)_{L^2(\bar\Omega)} \longrightarrow_{j=j_k, k\to \infty} \int_{\bar \Omega\times \bS^{n-1}} a_0(x,\xi) d\gamma(x,\xi)
$$
holds for all classical pseudo-differential operator $A$
whose principal symbol $a_0$   is $x$-supported in $\bar \Omega$. 
\end{lemma}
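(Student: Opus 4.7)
The plan follows the quantum-limit viewpoint of Subsection \ref{subsec_viewptQL}. I take $\cH = L^2(\bar\Omega)$ and consider the (not yet complete) subalgebra $\cA_0\subset \sL(\cH)$ consisting of all classical pseudo-differential operators of order $0$ whose principal symbol is $x$-supported in $\bar\Omega$. For each $j$ the map $\Phi_j:A\mapsto (Af_j,f_j)_\cH$ is a bounded linear functional on $\cA_0$, and the task is to pass to the limit along a subsequence and identify the limit with integration against a positive Radon measure on $\bar\Omega\times\bS^{n-1}$.

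First I would show that in the limit $\Phi_j$ depends only on the principal symbol of $A$. If $A,B\in\cA_0$ share the same principal symbol, then $A-B$ has order $-1$ with $x$-support in a neighbourhood of $\bar\Omega$; composing with a smooth cut-off equal to $1$ on $\bar\Omega$ does not affect $\Phi_j(A-B)$, and the resulting operator is compact on $\cH$ by the Rellich-Kondrachov theorem (using that $\Omega$ is bounded). Since $f_j\rightharpoonup 0$ in $\cH$, it follows that $(A-B)f_j\to 0$ strongly and thus $\Phi_j(A)-\Phi_j(B)\to 0$. Combined with the Calder\'on-Vaillancourt-type estimate $\|A\|_{\sL(\cH)}\leq C\|a_0\|_{L^\infty(\bar\Omega\times\bS^{n-1})}$ modulo lower-order terms, this shows that any accumulation point of $(\Phi_j)$ factors through a bounded linear functional on $C(\bar\Omega\times\bS^{n-1})$.

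Next, since $\bar\Omega\times\bS^{n-1}$ is a compact metric space, $C(\bar\Omega\times\bS^{n-1})$ is separable. A diagonal argument applied to a countable dense family of principal symbols, together with the uniform bound $|\Phi_j(A)|\leq\|A\|_{\sL(\cH)}\sup_j\|f_j\|_\cH^2$, produces a subsequence $(j_k)$ and a bounded linear functional $\ell$ on $C(\bar\Omega\times\bS^{n-1})$ such that $\Phi_{j_k}(A)\to \ell(a_0)$ for every $A\in\cA_0$ with principal symbol $a_0$.

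Finally, to upgrade $\ell$ to a positive measure I would invoke the sharp G\aa rding inequality: for $a_0\geq 0$ one can choose $A$ with principal symbol $a_0$ such that
$$
(Af,f)_\cH \geq -C\|f\|_{H^{-1/2}(\bR^n)}^2, \qquad f\in\cH.
$$
Since $\Omega$ is bounded, $L^2(\bar\Omega)$ embeds compactly into $H^{-1/2}(\bR^n)$, so $f_j\rightharpoonup 0$ in $\cH$ implies $\|f_j\|_{H^{-1/2}}\to 0$, whence $\ell(a_0)\geq 0$. The Riesz representation theorem then yields the positive Radon measure $\gamma$ with $\ell(a_0)=\int a_0\,d\gamma$. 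The main obstacle I anticipate is this final step: the operator-norm bound alone is too coarse to see positivity, and one must carefully combine the sharp G\aa rding inequality with the compact Sobolev embedding so that the lower-order errors are absorbed and $\ell(a_0)\geq 0$ is secured for every non-negative principal symbol.
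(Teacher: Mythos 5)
Your argument is correct, and its skeleton matches the paper's: you consider the functionals $A\mapsto (Af_j,f_j)_{L^2}$ on the algebra $\cA_0$ of order-zero classical operators supported over $\bar\Omega$, use Rellich's theorem to show that operators of order $<0$ (equivalently, the compact operators in $\cA_0$) contribute nothing in the limit, pass to a subsequence by separability, and conclude with the Riesz representation theorem on $C(\bar\Omega\times\bS^{n-1})$. Where you genuinely diverge is the step you yourself flag as the main obstacle, namely positivity. You obtain $\ell(a_0)\geq 0$ from the sharp G\aa rding inequality together with the compactness of the embedding $L^2(\bar\Omega)\hookrightarrow H^{-1/2}(\bR^n)$; this is in essence P.~G\'erard's original argument from \cite{gerard_91}, which the paper explicitly lists as one of the two classical routes it is trying to supersede. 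The paper's proof instead runs the positivity through $C^*$-algebra theory: the limit functional satisfies $\ell(A^*A)=\lim_k\|Af_{j_k}\|_{L^2}^2\geq 0$ and vanishes on the ideal $\cK$ of compact operators, hence descends to a \emph{state} on the commutative $C^*$-algebra $\overline{\cA_0/\cK}\cong C(\bar\Omega\times\bS^{n-1})$; positivity of the representing measure is then automatic, since every non-negative continuous function on $\bar\Omega\times\bS^{n-1}$ is $|b|^2$ for some principal symbol $b$, i.e.\ the image of some $B^*B$. The trade-off is worth noting: your route is analytically self-contained and needs only the symbol estimate plus G\aa rding, but the sharp G\aa rding inequality is precisely the ingredient that becomes delicate or unavailable for the operator-valued symbols on graded groups treated in Section~\ref{sec_PDOQLG}, whereas the $C^*$-state argument transfers essentially verbatim to Theorems~\ref{thm_MDMG} and \ref{thm_scmG}.
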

\begin{proof}[Sketch of the proof of Lemma \ref{lem_thm_MDM}]
If $\limsup_{j\to \infty} \|f_j\|_{L^2(\bar \Omega)}=0$, then $\gamma=0$.
Hence, we may assume that
 $\limsup_{j\to \infty} \|f_j\|_{L^2(\bar \Omega)}=1$.
We consider the  sequence of functionals $\ell_j :A \mapsto (Af_j,f_j)_{L^2}$  
on the algebra $\cA_0$ of  classical pseudo-differential operators $A$ whose symbols are $x$-supported in $\bar \Omega$. 

The weak convergence of $(f_j)$  to zero  means that $\lim_{j\to \infty} \ell_j(A)=0$ for every operator $A$ in 
$$
\cK = \{\mbox{compact operator in}  \ \cA_0 \} \sim \{\mbox{operators in} \ \cA_0 \ \mbox{of order }<0\},
$$
 by Rellich's theorem. 

The properties of the pseudo-differential calculus imply that a limit of $(\ell_j)_{j\in \bN}$  is a state on the closure of 
the quotient $\overline{\cA_0 /\cK}$; we recognise the abelian $C^*$-algebra generated by the principal symbols $x$-supported in $\bar \Omega$, that is, the space of continuous functions on the compact space $\bar\Omega\times \bS^{n-1}$. Such a state is given by a positive Radon measure on $\bar\Omega\times \bS^{n-1}$. 
\end{proof}

Let us now  give the  new proof of Theorem \ref{thm_MDM} announced above. 
Adopting the setting of the statement, we find a sequence of open sets $\Omega_k$, $k=1,2,\ldots$ such that $\bar \Omega_k$ is a compact subset of $\Omega_{k+1}$ and $\cup_{k\in \bN} \Omega_k =\Omega$.
Applying Lemma \ref{lem_thm_MDM} to each $\Omega_k$ together with a diagonal extraction yield Theorem \ref{thm_MDM}.

\medskip

The author and her collaborator Clotilde Fermanian Kammerer are forever indebted to Professor Vladimir Georgescu for his enlightening explanations on the proof of the existence of micro-local defect measures given above. 
Vladimir Georgescu's comments  describe also the states of other  $C^*$-algebras of operators bounded on  $L^2$ from profound works by O. Cordes and his collaborators  on Gelfand theory for pseudo-differential calculi
\cite{Cordes+H,Cordes79,Cordes87,Cordes95,Taylor71}. They also provide  a framework which generalises the two original  proofs of the existence of H-measure / micro-local defect measures: 
\begin{itemize}
\item 
the one by L. Tartar \cite{tartar} 
which  uses operators of multiplication  in position and Fourier multipliers in frequencies, and 
\item  
the one by P. G\'erard's \cite{gerard_91} relying on properties of the classical pseudo-differential calculus, especially the G\r arding inequality.
\end{itemize}

\subsection{Semi-classical measures as quantum limits}
\label{subsec_scmql}
 
 The semi-classical calculus used here is `basic' in the sense that it is restricted to the setting of $\bR^n$ and to operators $\Op_\eps(a)$ with $a\in C_c^\infty(\bR^n\times\bR^n)$ for instance. 
 Here $\Op_\eps(a)=\Op(a_\eps)$ is `the' pseudo-differential operator with symbol $a_\eps(x,\xi)=a(x,\eps \xi)$ via a chosen $t$-quantisation  on $\bR^n$ - for instance  the Weyl quantisation ($t=1/2$) or the Kohn-Nirenberg quantisation ($t=0$, also known as PDE quantisation and often written as $\Op(a) = a(x,D)$).
 More sophisticated semi-classical calculi can be defined, for instance allowing the symbols $a$ to depend on $\eps$ and in the context of manifolds, see e.g. \cite{zworski}.    
 
 Semi-classical measures were introduced in the 90's in works such as \cite{gerard_X,gerardleichtnam,GMMP,LionsPaul}. 
In this section, we show how the viewpoint of quantum limits gives a simple proof of their existence as in the case of micro-local defect measures (see Section \ref{subsec_viewptQL}). 
With the Weyl quantisation, the existence of semi-classical measures can be proved using graduate-level functional analysis and the resulting measures are called Wigner measures. But our proof below is independent of the chosen quantisation.

 \begin{theorem}
 \label{thm_scm}
 Let $(f_\eps)_{\eps>0}$ be a bounded family in $L^2(\bR^n)$. 
 Then there exists a sequence $\eps_k$, $k\in \bN$ with $\eps_k\to 0$ as $k\to \infty$, and a positive Radon measure  $\gamma$ on $\bR^n\times \bR^n$ such that 
 $$
 \forall a\in C_c^\infty (\bR^n\times\bR^n)
 \qquad
(\Op_\eps (a)f_\eps,f_\eps)_{L^2} \longrightarrow_{\eps=\eps_k, k\to \infty} \int_{ \bR^n\times \bR^n} a(x,\xi) d\gamma(x,\xi). 
$$
 \end{theorem}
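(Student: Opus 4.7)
The plan is to mirror the quantum-limit argument used in the sketch of Lemma \ref{lem_thm_MDM}, substituting the semi-classical calculus for the classical one. Consider the family of linear functionals $\ell_\eps : C_c^\infty(\bR^n\times \bR^n) \to \bC$ defined by $\ell_\eps(a) = (\Op_\eps(a) f_\eps, f_\eps)_{L^2}$. The first step is to establish a uniform bound of the form $|\ell_\eps(a)|\leq C\,p_N(a)\,\sup_\eps \|f_\eps\|_{L^2}^2$, where $p_N$ is a fixed Fr\'echet semi-norm on $C_c^\infty(\bR^n\times \bR^n)$ and $C$, $N$ are independent of $\eps$. This is a consequence of the semi-classical Calder\'on--Vaillancourt theorem: after the rescaling $a_\eps(x,\xi)=a(x,\eps\xi)$, the symbol $a_\eps$ has derivatives that are controlled uniformly in $\eps$ on the relevant scales, so $\|\Op_\eps(a)\|_{L^2\to L^2}$ is bounded independently of $\eps$.

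Next, I would extract a subsequence. Fix a countable set $\{a_m\}_{m\in \bN}\subset C_c^\infty(\bR^n\times\bR^n)$ that is dense in $C_0(\bR^n\times\bR^n)$ for the uniform norm. Since $(\ell_\eps(a_m))_{\eps>0}$ is bounded in $\bC$ for each $m$, a standard diagonal extraction (equivalently, Banach--Alaoglu on the unit ball of the dual of the separable Banach space $C_0(\bR^n\times\bR^n)$) produces a sequence $\eps_k\to 0$ along which $\ell_{\eps_k}(a_m)$ converges for every $m$. The uniform bound from the previous step then promotes this to a continuous linear functional $\ell$ on $C_0(\bR^n\times \bR^n)$ with $\ell_{\eps_k}(a)\to \ell(a)$ for all $a\in C_c^\infty(\bR^n\times\bR^n)$.

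The essential analytic step, and the place I expect the main obstacle, is proving that $\ell$ is positive, because only then does the Riesz--Markov representation theorem yield a positive Radon measure $\gamma$ with $\ell(a)=\int a\,d\gamma$. The right tool here is the semi-classical sharp G\r{a}rding inequality: for $a\in C_c^\infty(\bR^n\times\bR^n)$ with $a\geq 0$, one has $(\Op_\eps(a)f_\eps,f_\eps)_{L^2}\geq -C\eps\,\|f_\eps\|_{L^2}^2$ for some constant $C=C(a)$. Passing to the limit along $\eps_k$ gives $\ell(a)\geq 0$ for non-negative test symbols, which is exactly what the Riesz representation theorem requires. A subtlety worth flagging is that both the operator-norm bound and the G\r{a}rding estimate depend on the chosen $t$-quantisation; the identity $\Op_\eps^{(t)}(a)=\Op_\eps^{(s)}(a)+O_{L^2\to L^2}(\eps)$, a direct consequence of the semi-classical symbolic calculus applied to the change of quantisation, ensures that the measure $\gamma$ does not depend on this choice.

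Finally, the extension from $C_c^\infty$ to all of $C_c(\bR^n\times\bR^n)$ follows by density and the uniform continuity already established, so that the convergence in the statement of the theorem holds for every $a\in C_c^\infty(\bR^n\times\bR^n)$ (and in fact for every $a\in C_0(\bR^n\times\bR^n)$), which is the form stated.
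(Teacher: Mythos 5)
Your proposal is correct and follows essentially the same route as the paper's sketch: the paper likewise forms the functionals $\ell_\eps(a)=(\Op_\eps(a)f_\eps,f_\eps)_{L^2}$, extracts a limit $\ell$ by diagonal extraction using separability, and invokes ``the properties of the semi-classical calculus'' to conclude that $\ell$ is a state on the commutative $C^*$-algebra $\overline{C_c^\infty(\bR^n\times\bR^n)}$, hence a positive Radon measure. You have simply made explicit which properties are needed (the uniform Calder\'on--Vaillancourt bound for continuity and the sharp G\r{a}rding inequality for positivity), which is a faithful filling-in of the paper's argument rather than a different one.
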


\begin{proof}[Sketch of the proof of Theorem \ref{thm_scm}]
We may assume $\limsup_{\eps\to 0} \|f_\eps\|_{L^2}=1$.
We set $\ell_\eps(a) :=(\Op_\eps (a)f_\eps,f_\eps)_{L^2}$.  
 For each $a\in C_c^\infty (\bR^n\times\bR^n)$, $\ell_\eps(a)$ is bounded so its limits exist as $\eps\to 0$. 
 A diagonal extraction and the separability of $C_c^\infty (\bR^n\times\bR^n)$ yield the existence of $\ell = \lim_{k\to \infty} \ell_{\eps_k}$ on $C_c^\infty(\bR^n\times \bR^n)$. 
 From the properties of the  semi-classical calculus, one checks that $\ell$ extends to a state of the commutative $C^*$-algebra 
$\overline{C_c^\infty(\bR^n\times \bR^n)}$, hence a positive Radon measure on $\bR^n\times\bR^n$. 
\end{proof}

The semi-classical analogues of the examples of micro-local defect measures are: 
\begin{itemize}
\item an  $L^2$-concentration in space $f_\eps(x) = \eps^{- n/2}\chi(\frac{x-x_0}\eps ) $ about a point $x_0$ (again, $\chi\in C_c^\infty(\bR^n)$ is some given  function), whose semi-classical measure is $\gamma(x,\xi)=\delta_{x_0}(x) \otimes |\widehat \chi(\xi)|^2 d\xi$, 
\item an  $L^2$-concentration in oscillations $f_\eps(x) = \psi(x) e^{2i\pi  \xi_0 \cdot x / \eps}  $ about a frequency  $\xi_0\in \bR^{n}$ (again, $\psi\in C_c^\infty(\bR^n)$ is some given function), whose semi-classical measure is $\gamma=|\psi(x)|^2dx  \otimes \delta_{\xi_0}(\xi) $.  
\end{itemize}

\subsection{Applications}
\label{subsec_app}

Let us give an application of quantum limits to semi-classical analysis already mentioned in the introduction in the form of the following result taken from   \cite[Appendix A]{FFJST}. 
This is  an elementary version of properties   that hold in more general settings and for more general Hamiltonians, including integrable systems (see~\cite{AFM,CFM}). 

\begin{proposition}
Let $(\psi^\eps_0)_{\eps>0}$ be a bounded family in $L^2(\bR^n)$
and the associated solutions to the Schr\"odinger equation,
$$
i\eps^\tau \partial_t \psi^\eps = -\frac {\eps^2}2\Delta \psi^\eps, 
\qquad \ \psi^\eps|_{t=0} = \psi^\eps_0.
$$ 
where  $\Delta=- \sum_{1\leq j\leq n} \partial_{x_j}^2$ is the standard Laplace operator on $\bR^n$.
We assume that the oscillations of the initial data are exactly of size $1/\eps$ in the sense that we have:
$$
\exists s,C_s>0,\qquad \forall \eps>0\qquad   \eps^{s} \| \Delta^{s / 2} \psi^\eps_0\|_{L^2(\bR^n)}+  \eps^{-s} \| \Delta^{-{s / 2} }\psi^\eps_0\|_{L^2(\bR^n)}\leq C_s. 
$$
Any limit point of the measures $\left|\psi^\eps(t,x)\right| ^2 dxdt$ as $\eps\to 0$  is of the form $\varrho_t(x) dt$ where $\varrho_t$ is a measure on $\bR^n$ satisfying:
\begin{enumerate}
\item $\partial_t \varrho_t =0$ for $\tau\in(0,1)$,  
\item $\varrho_t(x)=\int_{\bR^n} \gamma_0(x-t\xi,d\xi)$ for $\tau=1$,
\item  $\varrho_t=0$ for $\tau >1$. 
\end{enumerate} 
\end{proposition}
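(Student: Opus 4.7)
The plan is to introduce the time-dependent family of semi-classical measures $\gamma_t$ of $\psi^\eps(t,\cdot)$, derive an equation for its $t$-evolution from the Schr\"odinger equation, and read off the three regimes from the scaling of the resulting commutator. I would first extract a subsequence $\eps_k\to 0$ along which
\begin{equation*}
\int_\bR \phi(t)\bigl(\Op_{\eps_k}(a)\psi^{\eps_k}(t),\psi^{\eps_k}(t)\bigr)_{L^2}\,dt \longrightarrow \int_\bR \phi(t)\int_{\bR^n\times\bR^n} a(x,\xi)\,d\gamma_t(x,\xi)\,dt
\end{equation*}
for all $\phi\in C_c(\bR)$ and $a\in C_c^\infty(\bR^n\times\bR^n)$, by a diagonal extraction parallel to Theorem~\ref{thm_scm}. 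Since $\Delta$ commutes with the propagator, the oscillation hypothesis is preserved in $t$: both $\eps^s\|\Delta^{s/2}\psi^\eps(t)\|_{L^2}$ and $\eps^{-s}\|\Delta^{-s/2}\psi^\eps(t)\|_{L^2}$ remain bounded uniformly in~$t$. Standard arguments then force each $\gamma_t$ to be supported in $\bR^n\times(\bR^n\setminus\{0\})$, to have finite total mass, and to capture the full limit mass of $|\psi^\eps(t,x)|^2\,dx$; in particular $\varrho_t$ is the first-variable projection of $\gamma_t$, so it suffices to identify $\gamma_t$ in each regime.

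Writing $H_\eps=-\frac{\eps^2}{2}\Delta=\Op_\eps(|\xi|^2/2)$, a direct computation on smooth solutions gives
\begin{equation*}
\partial_t\bigl(\Op_\eps(a)\psi^\eps,\psi^\eps\bigr) = \frac{i}{\eps^\tau}\bigl([H_\eps,\Op_\eps(a)]\psi^\eps,\psi^\eps\bigr),
\end{equation*}
and the semi-classical commutator expansion yields
\begin{equation*}
\frac{i}{\eps^\tau}[H_\eps,\Op_\eps(a)]=\eps^{1-\tau}\Op_\eps(\xi\cdot\nabla_x a)+O_{L^2\to L^2}(\eps^{3-\tau}).
\end{equation*}
Testing against $\phi'(t)$, integrating by parts in $t$, and passing to the limit along $\eps_k\to 0$ then gives, for $\tau<1$, a vanishing right-hand side and hence $\partial_t\gamma_t=0$, so $\varrho_t\equiv\varrho_0$; and for $\tau=1$, the free transport equation $\partial_t\gamma_t+\xi\cdot\nabla_x\gamma_t=0$, whose unique solution is the push-forward $\gamma_t=(\phi^t)_*\gamma_0$ along $\phi^t(x,\xi)=(x+t\xi,\xi)$, which projects onto $\varrho_t(x)=\int_{\bR^n}\gamma_0(x-t\xi,d\xi)$.

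The main obstacle is the regime $\tau>1$, where $\eps^{1-\tau}\to\infty$ and the standard Egorov reasoning reverses sign. My remedy is to multiply the identity above by $\eps^{\tau-1}$: the left-hand side becomes $O(\eps^{\tau-1})\to 0$, while the right-hand side converges to $\int_\bR\phi(t)\int \xi\cdot\nabla_x a\,d\gamma_t\,dt$. This forces, for almost every $t$ and every $a\in C_c^\infty(\bR^n\times\bR^n)$, the constraint $\nabla_x\cdot(\xi\gamma_t)=0$, i.e.\ each $\gamma_t$ is invariant under the translations $(x,\xi)\mapsto(x+s\xi,\xi)$, $s\in\bR$. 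Disintegrating $d\gamma_t=d\mu_t(\xi)\,d\nu_t^\xi(x)$, the fibre $\nu_t^\xi$ must be invariant under translations in the direction $\xi$; but a finite nonzero measure cannot carry this invariance unless $\xi=0$, and the oscillation hypothesis forbids $\mu_t$ from charging $\{0\}$. Hence $\gamma_t\equiv 0$ for a.e.\ $t$, and $\varrho_t=0$. It is precisely this final step that is delicate: the evolution equation alone only yields the translation invariance of $\gamma_t$, and it is the combination with the oscillation condition (which plays no role in the two easier regimes) that closes the argument.
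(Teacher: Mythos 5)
Your proposal follows essentially the same route as the paper's (sketched) proof: extract time-dependent semi-classical measures, derive their evolution from the commutator $\frac{i}{\eps^\tau}[H_\eps,\Op_\eps(a)]=\eps^{1-\tau}\Op_\eps(\xi\cdot\nabla_x a)+O(\eps^{3-\tau})$, and take $x$-marginals; your treatment of the regime $\tau>1$ (translation invariance of $\gamma_t$ plus finiteness plus the absence of mass at $\xi=0$ supplied by the oscillation hypothesis) correctly fills in the step the paper leaves implicit. The only point to watch is the sign of the transport term given the paper's convention $\Delta=-\sum_j\partial_{x_j}^2$, but your choice $\partial_t\gamma_t+\xi\cdot\nabla_x\gamma_t=0$ is the one consistent with the stated formula $\varrho_t(x)=\int_{\bR^n}\gamma_0(x-t\xi,d\xi)$.
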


\begin{proof}
Using for instance the notion of quantum limits, we obtain  time-dependent semi-classical measures in the sense of the existence of a subsequence $(\eps_k)$ and of a continuous map $t\mapsto \gamma_t$ from $\bR$ to the space of positive Radon measures such that 
$$
\int_\bR \theta(t) \,
(\Op_\eps (a)f_\eps,f_\eps)_{L^2} \longrightarrow_{\eps=\eps_k, k\to \infty} \iint_{\bR \times  \bR^{2n}} \theta(t)\, a(x,\xi)\, d\gamma_t(x,\xi) dt,
$$
for any $\theta\in C_c^\infty(\bR)$ and $a\in C_c^\infty(\bR^{2n})$. 
Now, 
up to a further extraction of a subsequence, we obtain using  the Schr\"odinger equation:
\begin{enumerate} 
\item for $\tau\in(0,1)$,  $\gamma_t(x,\xi)=\gamma_0(x,\xi)$ for all times $t\in\bR$,
\item for $\tau=1$,  
$
\partial_t \gamma_t(x,\xi) = \xi\cdot \nabla_x \gamma_t(x,\xi)$
in the sense of distributions,
\item for $\tau>1$, 
$\gamma_t=0$ for all times $t\in\bR$.
\end{enumerate}
Taking the $x$-marginals of the measures $\gamma_t$ gives the measures  described in the statement.  
\end{proof}

The usual Schr\"odinger equation corresponds to $\tau=1$, as  in the introduction of this paper. 
In this case, the description of the semi-classical measure above provides the link between the quantum world and the classical one: $\gamma_t$ is the composition of $\gamma_0$ with the Hamiltonian flow from classical mechanics. 

\section{Pseudo-differential theory and quantum limits on nilpotent Lie groups}
\label{sec_PDOQLG}

In this section, we will present the works 
\cite{FFchina,FFPisa,FFJST} of Clotilde Fermanian-Kammerer and the author about quantum limits on nilpotent Lie groups. We will only describe briefly the setting and the notation, referring the interested reader to the literature for all the technical details. 
We will end with a word on future  developments. 

\subsection{Preliminaries on nilpotent Lie groups}

Let us consider a nilpotent Lie group $G$; we will always assume that nilpotent Lie groups are connected and simply connected. 
If we fix a basis $X_1,\ldots, X_n$ of its Lie algebra $\fg$, 
via the exponential mapping $\exp_G : \fg \to G$, we   identify 
the points $(x_{1},\ldots,x_n)\in \bR^n$ 
 with the points  $x=\exp_G(x_{1}X_1+\cdots+x_n X_n)$ in~ $G$.
This also leads to a corresponding Lebesgue measure on $\fg$ and the Haar measure $dx$ on the group $G$,
hence $L^p(G)\cong L^p(\bR^n)$ 
and  we allow ourselves to denote by $C_c^\infty(G), \, \cS(G)$ etc,
the spaces of continuous functions, of smooth and compactly supported functions or 
of Schwartz functions on $G$ identified with $\bR^n$,
and similarly for distributions.

The group convolution of two functions $f_1$ and $f_2$, 
for instance square integrable, 
is defined via 
$$
 (f_1*f_2)(x):=\int_G f_1(y) f_2(y^{-1}x) dy.
$$
The convolution is not commutative: in general, $f_1*f_2\not=f_2*f_1$.

A vector of $\fg$ defines a left-invariant vector field on $G$ 
and, more generally, 
 the universal enveloping Lie algebra $\fU(\fg)$ of $\fg$ 
is isomorphic to the space of the left-invariant differential operators; 
we keep the same notation for the vectors and the corresponding operators. 

Let $\pi$ be a representation of $G$. 
Unless otherwise stated, we always assume that such a representation $\pi$ 
is strongly continuous and unitary, and acts on a separable Hilbert space denoted by $\cH_\pi$.
Furthermore, we keep the same notation for the corresponding infinitesimal representation
which acts on  $\fU(\fg)$ 
and on the space $\cH_\pi^\infty$ of smooth vectors. 
It is characterised by its action on $\fg$
$$
\pi(X)=\partial_{t=0}\pi(e^{tX}),
\quad  X\in \fg.
$$
We define the \emph{group Fourier transform} of a function  $f\in L^1(G)$
  at $\pi$ by
$$
 \pi(f) \equiv \widehat f(\pi) \equiv \cF_G(f)(\pi)=\int_G f(x) \pi(x)^*dx.
$$ 

We denote by $\Gh$ the unitary dual of $G$,
that is, the unitary irreducible representations of $G$ modulo equivalence and identify a unitary irreducible representation 
with its class in $\Gh$. The set $\Gh$ is naturally equipped with a structure of standard Borel space.
The Plancherel measure is the unique positive Borel measure $\mu$ 
on $\Gh$ such that 
for any $f\in C_c(G)$, we have:
\begin{equation}
\label{eq_plancherel_formula}
\int_G |f(x)|^2 dx = \int_{\Gh} \|\cF_G(f)(\pi)\|_{HS(\cH_\pi)}^2 d\mu(\pi).
\end{equation}
Here $\|\cdot\|_{HS(\cH_\pi)}$ denotes the Hilbert-Schmidt norm on $\cH_\pi$.
This implies that the group Fourier transform extends unitarily from 
$L^1(G)\cap L^2(G)$ to $L^2(G)$ onto 
$L^2(\Gh):=\int_{\Gh} \cH_\pi \otimes\cH_\pi^* d\mu(\pi)$
which we identify with the space of $\mu$-square integrable fields on $\Gh$.

A \emph{symbol} is a measurable field of operators $\sigma(x,\pi):\cH_\pi^\infty \to \cH_\pi^\infty$, parametrised by $x\in G$ and $\pi\in \Gh$.
We formally associate to $\sigma$ the operator $\Op(\sigma)$
as follows
$$
\Op(\sigma) f (x) := \int_G 
\tr \left(\pi(x) \sigma(x,\pi) \widehat f (\pi) \right)
d\mu(\pi),
$$
where $f\in \cS(G)$ and $x\in G$.
If $G$ is the abelian group $\bR^n$, this corresponds to the Kohn-Nirenberg quantisation.

Regarding symbols, when no confusion is possible,
we will allow ourselves some notational shortcuts, 
for instance writing $\sigma(x,\pi)$ 
when considering the field of operators $\{\sigma(x,\pi) :\cH_\pi^\infty \to \cH_\pi^\infty, (x,\pi)\in G\times\Gh\}$ with the usual identifications
for $\pi\in \Gh$ and $\mu$-measurability.

This quantisation has already been observed in \cite{TaylorAMS,Bahouri+Fermanian+Gallagher,R+F_monograph} for instance.
It can be viewed as an analogue of the Kohn-Nirenberg quantisation
since the inverse formula can be written as 
$$
 f (x) := \int_G 
\tr \left(\pi(x) \widehat f (\pi) \right)
d\mu(\pi),
\quad f\in \cS(G), \ x\in G.
$$
This also shows that the operator 
associated with the symbol $\id=\{\id_{\cH_\pi} , (x,\pi)\in G\times\Gh\} $
is the identity operator $\Op(\id)=\id$.

Note that (formally or whenever it makes sense),
if we denote the (right convolution) kernel of $\Op(\sigma)$ by $\kappa_x$,
that is, 
$$
\Op(\sigma)\phi(x)=\phi*\kappa_x,
\quad x\in G, \ \phi\in \cS(G),
$$
then it is given by
$$
\pi(\kappa_x)=\sigma(x,\pi).
$$
Moreover  the integral kernel of $\Op(\sigma)$ is 
$$
K(x,y)=\kappa_x(y^{-1}x),\quad\mbox{where}\quad
\Op(\sigma)\phi(x)=\int_G K(x,y) \phi(y)dy.
$$
We shall abuse the vocabulary and call $\kappa_x$ 
the kernel of $\sigma$, and $K$ its integral kernel.

\subsection{Pseudo-differential calculi on graded nilpotent Lie groups}

\subsubsection{Preliminaries on graded groups}
Graded groups are connected and simply connected 
Lie group 
whose Lie algebra $\fg$ 
admits an $\bN$-gradation
$\fg= \oplus_{\ell=1}^\infty \fg_{\ell}$
where the $\fg_{\ell}$, $\ell=1,2,\ldots$, 
are vector subspaces of $\fg$,
almost all equal to $\{0\}$,
and satisfying 
$[\fg_{\ell},\fg_{\ell'}]\subset\fg_{\ell+\ell'}$
for any $\ell,\ell'\in \bN$.
These groups are nilpotent. Examples of such groups are the Heisenberg group
 and, more generally,
all stratified groups (which by definition correspond to the case $\fg_1$ generating the full Lie algebra $\fg$); with a choice of basis or of scalar product on $\fg_1$, the latter are called Carnot groups. 

Graded groups are homogeneous in the sense of Folland-Stein \cite{folland+stein_82}
 when equipped with the  dilations
 given by the  linear mappings $D_r:\fg\to \fg$, 
$D_r X=r^\ell X$ for every $X\in \fg_\ell$, $\ell\in \bN$.
We may re-write the set of integers $\ell\in \bN$ such that $\fg_\ell\not=\{0\}$
into the increasing sequence of positive integers
 $\upsilon_1,\ldots,\upsilon_n$ counted with multiplicity,
 the multiplicity of $\fg_\ell$ being its dimension.
 In this way, the integers $\upsilon_1,\ldots, \upsilon_n$ become 
 the weights of the dilations and we have $D_r X_j =r^{\upsilon_j} X_j$, $j=1,\ldots, n$,
 on a basis $X_1,\ldots, X_n$  of $\fg$ adapted to the gradation.
 
 We denote the corresponding dilations on the group via
 $$
 rx = \exp (D_r X), \quad \mbox{for} \ x= \exp (X)\in G.
 $$
This leads to homogeneous notions for functions, distributions and operators. For instance, 
the homogeneous dimension of $G$ is the homogeneity of the Haar measure, that is, 
$Q:=\sum_{\ell\in \bN}\ell \dim \fg_\ell $;
and the differential operator $X^\alpha$ is homogeneous of degree 
$[\alpha]:=\sum_j \upsilon_j\alpha_{j}$.

\subsubsection{The symbolic pseudo-differential calculus on $G$}
\label{subsubsec_symbPDC}

In the monograph \cite{R+F_monograph},
the (Fr\'echet) space $S^m(G)$ of symbols of degree $m\in \bR$ on $G$ is defined
and the properties of  the corresponding space of operators $\Psi^m(G) = \Op(S^m(G))$ are studied.
Naturally, when $G$ is the abelian group $\bR$, 
the classes of symbols and of operators  are the ones due to H\"ormander. 

In the monograph, 
it is proved that $\Psi^*(G):= \cup_{m\in \bR} \Psi^m(G)$ is a symbolic pseudo-differential calculus  in the following sense: 
\begin{itemize}
\item $\Psi^*(G)$ is an algebra of operators, 
with an asymptotic formula for 
$\Op(\sigma_1)\Op(\sigma_2)=\Op(\sigma)$.
\item $\Psi^m(G)$ is adjoint-stable, i.e. 
$\Op(\sigma)^* =\Op(\tau) \in \Psi^m(G)$ when $\sigma\in S^m(G)$, 
with an asymptotic formula for $\tau$. 
\item
 $\Psi^*(G)$
contains the left-invariant  differential calculus as
$X^\alpha \in \Psi^{[\alpha]}(G)$. 
\item
 $\Psi^*(G)$
 contains the spectral calculus of the positive Rockland operators.
Note that in the context of graded groups,  the positive Rockland operators are the analogues of the elliptic operators 
\item
$\Psi^*(G)$ acts continuously on the  Sobolev spaces adapted to the graded groups with
$\Psi^m(G)\ni T : L^p_s(G)\hookrightarrow L^p_{s-m}(G)$.
\end{itemize}

\subsubsection{The classical pseudo-differential calculus on $G$}
\label{subsubsec_PsiclG}

Part of the paper \cite{FFPisa} is devoted to defining the notions of homogeneous symbols
and of classes $\dot S^m(G)$ of homogeneous symbols of degree $m$. 
Indeed, the dilations on the group $G$ induce an action of $\bR^+$ on the dual $\Gh$ via
\begin{equation}
\label{eq_rpi}	
r \cdot \pi (x) = \pi(r x), \qquad \pi\in \Gh, \ r>0, \ x\in G. 
\end{equation}

The homogeneous symbols are then measurable fields of operators  on $G\times \Sigma_1$ where
$$
\Sigma_1:=(\Gh / \bR^+) \setminus \{1_{\Gh}\}.
$$
is the analogue of the sphere on the Fourier side in the Euclidean case. 

This then allows us to consider symbols admitting a homogeneous expansion.
The space of operators in $\Psi^m(G)$ which admits a homogeneous expansion and whose integral kernel is compactly supported is denoted by $\Psi^m_{cl}(G)$. 
It is proved that $\Psi_{cl}^*(G):= \cup_{m\in \bR} \Psi^m_{cl}(G)$ is also a symbolic pseudo-differential calculus in the same sense as in Section \ref{subsubsec_symbPDC}. 
Furthermore, there is a natural notion of principal symbol associated to a symbol; the principal symbol is homogeneous by construction. 

Again, when $G$ is the abelian group $\bR^n$, 
this calculus is the well-known  classical pseudo-differential calculus, 
and the notion of principal symbol is the usual one. 

We set  $\Psi_{cl}^{\leq 0}(G):= \cup_{m\leq 0} \Psi^m_{cl}(G)$.
Depending on the context,  
the classical pseudo-differential calculus on $G$ may refer to 
the space of operators of any order in $\Psi_{cl}^*(G)$
or to the space of operators of non-positive orders $\Psi_{cl}^{\leq 0}(G)$. 

\subsubsection{The semi-classical pseudo-differential calculus on $G$}

The semi-classical pseudodifferential calculus 
 was presented in the context of groups of Heisenberg type in \cite{FFJST}, but in fact extends readily to any graded group $G$. 

We consider the class of symbols ${\mathcal A}_0$ of fields of operators defined on $G\times \Gh$ 
$$
\sigma(x,\pi)\in{\mathcal   L}(\cH_\pi),\;\;(x,\pi)\in G\times\Gh,
$$
that are of the form 
$$\sigma(x,\pi) = \cF_G \kappa_{x} (\pi),$$
where $\kappa_{x}(y)$ is smooth and compactly supported in $x$ while being Schwartz in $y$; more technically,   the map  
  $x\mapsto \kappa_{x}$ is in $C_c^\infty(G:\cS(G))$.
The group Fourier transform yields a bijection  from $C_c^\infty(G:\cS(G))$ onto $\cA_0$, and we equip $\cA_0$ with the Fr\'echet topology so that this mapping is an isomorphism of topological vector spaces. 

Let $\eps\in (0,1]$ be a small parameter.
For every symbol $\sigma\in \cA_0$, we consider the dilated symbol
obtained using the action of $\bR^+$ on $\Gh$, see \eqref{eq_rpi}, 
$$
\sigma^{(\eps)}:=
\{\sigma(x,\eps \cdot\pi) : (x, \pi)\in G\times \Gh\},
$$
and then the associated operator
$$
\Op^\eps (\sigma) :=  \Op (\sigma^{(\eps)}).
$$

As in the case of $\bR^n$ (see Section \ref{subsec_scmql}), 
this yields a (basic)
semi-classical calculus in the following sense:
\begin{itemize}
\item Each operator $ \Op^\eps (\sigma)$, $\sigma\in \cA_0$, is bounded on $L^2(G)$ with
$$
\| \Op^\eps (\sigma)\|_{\sL(L^2(G))} \leq  \|\sigma\|_{\cA_0}:= \int_{G} \sup_{x\in G}  |\kappa_{x}(y)|dy,
$$
where  $\kappa_x$ is the kernel of $\sigma$; $\|\cdot\|_{\cA_0}$ defines a continuous semi-norm on $\cA_0$.
\item The singularities  of the operators concentrate around the diagonal of the integral kernels as $\eps \to 0$:
$$
\forall N\in \bN \quad \exists C_N>0\quad \forall \eps\in (0,1] ,\ \sigma\in \cA_0\quad 
\|\sigma -  \cF_G \left( \kappa_{x} \chi (\eps \, \cdot)\right)\|_{\cA_0} \leq C {\eps}^{N}
$$
where $\chi \in C_c^\infty(G)$ is a fixed function identically equal to 1 on a neighbourhood of 0.
\item There is a calculus in the sense of expansions in powers of $\eps$ in $\sL(L^2(G))$
for products 
$\Op^{(\eps)}(\sigma_1)\Op^{(\eps)}(\sigma_2)$ and for  adjoints 
$\Op^{(\eps)}(\sigma)^*$; here $\sigma_1,\sigma_2,\sigma\in \cA_0$. 
\end{itemize}

\subsection{Operator-valued measures}
In Section \ref{sec_QL}, we explained why quantum limits in Euclidean or elliptic settings are often described with positive Radon measures on the spaces  of  symbols as these spaces are then commutative $C^*$-algebras.  
In the context of nilpotent Lie groups, the symbols are operator-valued, and we will see below that our examples of quantum limits  will then be described in terms of operator-valued measures  as introduced in \cite{FFchina,FFPisa}. Let us recall the precise definition of this notion:

\begin{definition}
\label{def_gammaGamma}
	Let $Z$ be a complete separable metric space, 
	and let $\xi\mapsto \cH_\xi$ be a measurable field of complex Hilbert spaces of $Z$.
\begin{itemize}
\item 
	The set 
	$ \widetilde{\mathcal M}_{ov}(Z,(\cH_\xi)_{\xi\in Z})$
	is the set of pairs $(\gamma,\Gamma)$ where $\gamma$ is a positive Radon measure on~$Z$ 
	and $\Gamma=\{\Gamma(\xi)\in {\mathcal L}(\cH_\xi):\xi \in Z\}$ is a measurable field of trace-class operators
such that
$$\|\Gamma d \gamma\|_{\mathcal M}:=\int_Z{\rm Tr}_{\cH_\xi} |\Gamma(\xi)|d\gamma(\xi)
<\infty.
$$
Here ${\rm Tr}_{\cH_\xi} |\Gamma(\xi)|$ denotes the standard trace of the trace-class operator $ |\Gamma(\xi)|$ on the separable Hilbert space $\cH_\xi$. 	
	
 \item 
	Two pairs $(\gamma,\Gamma)$ and $(\gamma',\Gamma')$ 
in $\widetilde {\mathcal M}_{ov}(Z,(\cH_\xi)_{\xi\in Z})$
are {equivalent} when there exists a measurable function $f:Z\to \mathbb C\setminus\{0\}$ such that 
$$d\gamma'(\xi) =f(\xi)  d\gamma(\xi)\;\;{\rm  and} \;\;\Gamma'(\xi)=\frac 1 {f(\xi)} \Gamma(\xi)$$ for $\gamma$-almost every $\xi\in Z$.
The equivalence class of $(\gamma,\Gamma)$ is denoted by $\Gamma d \gamma$,
and the resulting quotient set is 
 denoted by ${\mathcal M}_{ov}(Z,(\cH_\xi)_{\xi\in Z})$.

\item 
A pair $(\gamma,\Gamma)$ 
in $ \widetilde {\mathcal M}_{ov}(Z,(\cH_\xi)_{\xi\in Z})$
 is {positive} when 
$\Gamma(\xi)\geq 0$ for $\gamma$-almost all $\xi\in Z$.
In  this case, we may write  $(\gamma,\Gamma)\in  \widetilde {\mathcal M}_{ov}^+(Z,(\cH_\xi)_{\xi\in Z})$, 
and $\Gamma d\gamma \geq 0$ for $\Gamma d\gamma \in {\mathcal M}_{ov}^+(Z,(\cH_\xi)_{\xi\in Z})$.
\end{itemize}
\end{definition}

By convention and if not otherwise specified,  a representative of the class $\Gamma d\gamma$ is chosen such that ${\rm Tr}_{\cH_\xi} \Gamma=1$. 
In particular, if $\cH_\xi$ is $1$-dimensional, $\Gamma=1$ and  $\Gamma d\gamma$ reduces to the measure $d\gamma$.
One checks readily that $\mathcal M_{ov} (Z,(\cH_\xi)_{\xi\in Z})$ equipped with the norm $\| \cdot\|_{{\mathcal M}}$ is a Banach space.

\medskip 

When the field of Hilbert spaces is clear from the setting, 
we may write 
$$
\mathcal M_{ov} (Z) = \mathcal M (Z,(\cH_\xi)_{\xi\in Z}),
\quad
\mbox{and}\quad
\mathcal M_{ov}^+ (Z) = \mathcal M^+ (Z,(\cH_\xi)_{\xi\in Z}),
$$
for short.
For instance, if $\xi\mapsto \cH_\xi$ is given by $\mathcal H_\xi=\mathbb C$ for all $\xi$, 
then $\mathcal M (Z)$ coincides with the space of finite Radon measures on $Z$.
Another example is when $Z$ is of the form $Z=Z_1 \times \widehat G$ where $Z_1$ is a complete separable metric space, and $\mathcal H_{(z_1,\pi)}= \cH_\pi$, where
 the Hilbert space $\cH_\pi$ is associated with the representation $\pi \in \widehat G$. 

\subsection{Micro-local defect measures on graded Lie groups}

In \cite{FFPisa}, the following analogue to Theorem \ref{thm_MDM} is proved 
in the setting of graded groups. It uses the classical pseudo-differential calculus and the sphere $\Sigma_1$ of the dual as mentioned in Section \ref{subsubsec_PsiclG} and the notion of operator-valued measure (see 
Definition \ref{def_gammaGamma}). 

\begin{theorem}
\label{thm_MDMG}
Let $\Omega$ be an open subset of $G$. 
Let $(f_j)_{j\in \bN}$ be a bounded sequence in $L^2(\Omega,loc)$ converging weakly to 0.  
Then there exists a subsequence $(j_k)_{k\in \bN}$ and an operator-valued measure 
$\Gamma d\gamma \in \cM_{ov}^+(G\times \Sigma_1 )$
 such that 
$$
(Af_j,f_j)_{L^2} \longrightarrow_{j=j_k, k\to \infty} 
\int_{\Omega \times \Sigma_1}
\tr \left(\sigma_0 (x,\dot \pi) \ \Gamma(x,\dot \pi) \right)
d  \gamma(x,\dot\pi) \, ,
$$
holds for all classical pseudo-differential operator $A\in \Psi^{\leq 0}_{cl}(G)$, $\sigma_0$ denoting its principal symbol. 
\end{theorem}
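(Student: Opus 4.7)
The plan is to transpose the $C^*$-algebraic proof of Theorem \ref{thm_MDM} sketched in Section \ref{subsec_viewptQL} to the graded nilpotent setting; the essential new ingredient is that principal symbols are operator-valued, so the relevant algebra of principal symbols is non-commutative and its states are naturally described by operator-valued measures in the sense of Definition \ref{def_gammaGamma}.

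First I would reduce to a bounded situation by exhausting $\Omega$ with an increasing sequence of relatively compact open sets $\Omega_1 \subset\subset \Omega_2 \subset\subset \cdots$ with $\bigcup_k \Omega_k = \Omega$, and performing a diagonal extraction as in the deduction of Theorem \ref{thm_MDM} from Lemma \ref{lem_thm_MDM}. On a fixed $\bar\Omega_k$, we may assume $\limsup_j \|f_j\|_{L^2(\bar\Omega_k)} = 1$, else the restricted limit is trivially zero. I would then consider the sequence of functionals $\ell_j : A \mapsto (Af_j,f_j)_{L^2}$ on the $*$-algebra $\cA_0$ of operators in $\Psi_{cl}^{\leq 0}(G)$ whose principal symbol is $x$-supported in $\bar\Omega_k$ and whose integral kernel is compactly supported in $\bar\Omega_k \times \bar\Omega_k$. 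By the mapping properties of $\Psi_{cl}^*(G)$ recalled in Section \ref{subsubsec_PsiclG}, any operator of strictly negative order in $\cA_0$ sends $L^2(\bar\Omega_k)$ into a graded Sobolev space of positive order with support in $\bar\Omega_k$, and the resulting Rellich-type compact embedding forces such operators to be compact on $L^2(\bar\Omega_k)$. Since $f_j \rightharpoonup 0$, any accumulation point $\ell$ of $(\ell_j)$ vanishes on the ideal $\cK$ of these compact operators and hence factors through the quotient $\cA_0/\cK$.

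Next, the principal symbol map induces an isometric $*$-isomorphism between the $C^*$-completion of $\cA_0/\cK$ and a $C^*$-algebra $\cB$ of operator-valued sections over the compact space $\bar\Omega_k \times \Sigma_1$ with fibre in $\cL(\cH_\pi)$, equipped with the operator norm in each fibre; this is the graded-group analogue of the identification of $\cA_0/\cK$ with $C(\bar\Omega \times \bS^{n-1})$ in the Euclidean case. The functional $\ell$ extends by continuity to a positive state on $\cB$, and a non-commutative disintegration argument then produces a pair $(\gamma,\Gamma) \in \widetilde\cM_{ov}^+(\bar\Omega_k \times \Sigma_1)$ such that
\[
\ell(\sigma_0) = \int_{\bar\Omega_k \times \Sigma_1} \tr\bigl(\sigma_0(x,\dot\pi)\,\Gamma(x,\dot\pi)\bigr)\, d\gamma(x,\dot\pi)
\]
for every principal symbol $\sigma_0$ in $\cB$.

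The main obstacle is this last step: building the $C^*$-algebra $\cB$ so that its positive states correspond bijectively to positive operator-valued measures on $\bar\Omega_k \times \Sigma_1$. This requires care with the topology and measurable structure on $\Sigma_1$ (which is naturally only a standard Borel space), with the measurability of the fields $\dot\pi \mapsto \sigma_0(x,\dot\pi)$, and with a non-commutative Riesz-type representation theorem adapted to continuous fields of operators over a compact base. Once this is in place, a final diagonal extraction over the exhaustion $(\Omega_k)$ assembles the local operator-valued measures into a single $\Gamma\, d\gamma \in \cM_{ov}^+(G\times \Sigma_1)$ representing the common limit of the $\ell_{j_k}$, yielding the theorem.
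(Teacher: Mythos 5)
Your proposal follows essentially the same route as the paper, which itself only indicates that the proof of Theorem \ref{thm_MDMG} in the cited reference ``follows the same ideas as the ones presented in Section \ref{subsec_viewptQL} with the adaptations that come from dealing with a more non-commutative $C^*$-algebra of symbols'': exhaustion and diagonal extraction, Rellich-type compactness to kill the negative-order ideal, passage to the quotient $C^*$-algebra of principal symbols, and identification of its states with positive operator-valued measures on $\bar\Omega_k\times\Sigma_1$. You also correctly isolate the genuine technical content (the non-commutative Riesz-type representation of states over $\Sigma_1$, with its Borel rather than topological structure), which is precisely the adaptation the paper attributes to the full treatment in the reference.
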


The proof of Theorem \ref{thm_MDMG} given in \cite{FFPisa} follows the same ideas as the ones presented in Section \ref{subsec_viewptQL} with the adaptations that come from dealing with a more non-commutative $C^*$-algebra of symbols.

Examples of micro-local defect measures developed in \cite{FFPisa} include 
\begin{itemize}
\item an  $L^2$-concentration in space,
\item an  $L^2$-concentration in oscillations using matrix coefficients of representations.  
\end{itemize} 
An application to compensated compactness is also deduced. 
It would be interesting to relate this to the works by B. Franchi and his collaborators \cite{Baldi+Franchi+Tesi08, Baldi+Franchi+Tesi08b,Franchi,Franchi+Tchou+Tesi} 
 on compensated compactness on the Heisenberg group. 

\subsection{Semi-classical measures on graded Lie groups}

In \cite{FFchina}, 
the semi-classical analysis developed on $G$ yields the same property of existence of (group) semi-classical measures:

 \begin{theorem}
 \label{thm_scmG}
 Let $(f_\eps)_{\eps>0}$ be a bounded family in $L^2(G)$. 
 Then there exists a sequence $\eps_k$, $k\in \bN$ with $\eps_k\to 0$ as $k\to \infty$, 
 and an operator-valued measure 
$\Gamma d\gamma \in \cM_{ov}^+(G\times \Gh )$
 satisfying
 $$
 \forall \sigma\in \cA_0
 \qquad
(\Op_\eps (\sigma)f_\eps,f_\eps)_{L^2} \longrightarrow_{\eps=\eps_k, k\to \infty} 
\int_{\Omega \times \Gh}
\tr \left(\sigma (x, \pi) \ \Gamma(x, \pi) \right)
d  \gamma(x,\pi) .
$$
 \end{theorem}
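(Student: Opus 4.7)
The plan is to imitate the quantum-limit argument used for Theorem \ref{thm_scm} in Section \ref{subsec_scmql}, with two essential adaptations: the $C^*$-algebra generated by the symbols is no longer commutative, and the representing object is an operator-valued measure in the sense of Definition \ref{def_gammaGamma} rather than a scalar Radon measure.

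First I would, without loss of generality, assume $\limsup_{\eps\to 0}\|f_\eps\|_{L^2(G)}\leq 1$ and introduce the functionals
$$\ell_\eps(\sigma):=(\Op^\eps(\sigma)f_\eps,f_\eps)_{L^2(G)},\qquad \sigma\in\cA_0.$$
The boundedness estimate $\|\Op^\eps(\sigma)\|_{\sL(L^2(G))}\leq \|\sigma\|_{\cA_0}$ recalled in the presentation of the semi-classical calculus yields $|\ell_\eps(\sigma)|\leq C\|\sigma\|_{\cA_0}$, uniformly in $\eps\in(0,1]$. Since $\cA_0\cong C_c^\infty(G:\cS(G))$ is separable, a standard diagonal extraction produces a sequence $\eps_k\to 0$ such that $\ell(\sigma):=\lim_{k\to\infty}\ell_{\eps_k}(\sigma)$ exists for every $\sigma\in\cA_0$, defining a continuous linear functional on the Fr\'echet space $\cA_0$.

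Next I would upgrade $\ell$ to a positive state on a suitable $C^*$-completion of $\cA_0$. The product and adjoint asymptotic expansions of the semi-classical calculus give $\Op^\eps(\sigma)^*\Op^\eps(\sigma)=\Op^\eps(\sigma^*\sigma)+O(\eps)$ in $\sL(L^2(G))$, so
$$\ell_\eps(\sigma^*\sigma)=\|\Op^\eps(\sigma)f_\eps\|_{L^2(G)}^2+O(\eps)\geq -O(\eps),$$
and passing to the limit yields $\ell(\sigma^*\sigma)\geq 0$. Together with operator-norm continuity (which in turn rests on the semi-classical $L^2$-bound), this promotes $\ell$ to a state of the $C^*$-completion $\mathfrak A$ of $\cA_0$, a non-commutative analogue of $C_0(\bR^n\times\bR^n)$, naturally fibered over $G\times\Gh$ with fibers inside $\sL(\cH_\pi)$.

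The final, and genuinely non-commutative, step is a disintegration: a state on such a fibered $C^*$-algebra is represented, via an operator-valued Riesz theorem, by an element $\Gamma d\gamma\in\cM_{ov}^+(G\times\Gh)$ so that
$$\ell(\sigma)=\int_{G\times\Gh}\tr_{\cH_\pi}\bigl(\sigma(x,\pi)\,\Gamma(x,\pi)\bigr)\,d\gamma(x,\pi).$$
The main obstacle is precisely this last step: one must disintegrate the positive functional $\ell$ along the Plancherel bundle $\pi\mapsto\cH_\pi$ over the standard Borel space $\Gh$, and verify that the resulting field $\Gamma(x,\pi)$ is measurable, trace-class $\gamma$-almost everywhere, and of unit trace after the natural normalisation. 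This is the non-commutative counterpart of the Riesz representation step invoked in the Euclidean proof of Theorem \ref{thm_scm}, and the machinery developed for the micro-local case in \cite{FFchina,FFPisa} can be transposed essentially verbatim once the semi-classical $C^*$-completion of $\cA_0$ has been identified.
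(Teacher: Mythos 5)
Your proposal follows essentially the same route as the paper, which does not prove Theorem \ref{thm_scmG} in detail but explicitly indicates that it is obtained by the quantum-limit argument of Sections \ref{subsec_viewptQL} and \ref{subsec_scmql} (uniform $L^2$-bound, diagonal extraction over the separable symbol space, positivity from the symbolic calculus, then representation of the resulting state), with the non-commutative adaptation that the state on the $C^*$-completion of $\cA_0$ is represented by an operator-valued measure in $\cM_{ov}^+(G\times\Gh)$ as in \cite{FFchina,FFPisa}. You correctly isolate that last disintegration step as the genuinely non-commutative ingredient, which is exactly where the paper defers to the cited works.
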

 
The (group) semi-classical analogues of the (group) micro-local defect measures for  an $L^{2}$-concentration in space and an $L^{2}$-concentration in oscillations is also given in \cite{FFchina} in the context of the groups of Heisenberg type; naturally, the former holds on any graded group. 

In \cite{FFJST}, we prove an analogue of  the application given in Section \ref{subsec_app} but for the sub-Laplacian on any group of Heisenberg type.
We obtain a description of the $t$-dependent group semi-classical measures corresponding to the solutions to the Schr\"odinger equations, 
and therefore of their weak limits after taking the $x$-marginals. 
However, there is not one threshold $\tau=1$ as in the Euclidean case, but two, namely $\tau=1$ and $\tau=2$. 
More precisely, the semi-classical measures and the weak limits can be written  into two parts:
\begin{itemize}
\item	
 one with a Euclidean behaviour and threshold $\tau=1$, 
and
\item  one with threshold $\tau=2$. 
\end{itemize}
With our methods, this comes from the splitting of the unitary dual $\Gh$ into the following two subsets:
\begin{itemize}
\item the subsets of
 infinite dimensional representations (for instance realised as the Scr\"odinger representations), and
 \item the subset of finite dimensional representations, in fact of dimension one and given by the (abelian or Euclidean) characters of the first stratum. 
 \end{itemize}
This splitting is also present in other works that do not involve representation theory; see for instance \cite{BS} about the Grushin-Schr\"odinger equation and~\cite{Zeld97,CdVHT} about sublaplacians on contact manifolds.
In fact, this phenomenon of slower dispersion  than in Euclidean settings has already been observed for other sub-Riemannian PDEs, see e.g. \cite{BGX,hiero,BFG2}.

\subsection{Future works}

The tools developed so far in \cite{FFchina,FFPisa,FFJST} 
can be adapted to (graded) nilmanifolds along the lines of \cite{Fermanian+Letrouit}. 
Nilmanifolds are quotients of nilpotent Lie groups by a discrete subgroup. 
When the subgroup is also co-compact, this results in a compact manifold which is locally given by the group. This provides an excellent setting for the applications  to PDEs of the theory developed  in \cite{FFchina,FFPisa,FFJST} .  

The extension to sub-Riemannian manifolds will certainly be more difficult. However, given the recent progress in groupoids 
on filtered manifolds \cite{vanErp,choi+ponge,vanErp+Y},
the author feels confident that the semi-classical and micro-local analysis already developed on graded groups will be transferable to the setting of equiregular sub-Riemannian manifolds in the near future.

\bibliographystyle{alpha}

\end{document}